\newtheorem{thm}{Theorem}
\newtheorem{prop}[thm]{Proposition}
\newtheorem{lem}[thm]{Lemma}
\newtheorem{problem}[thm]{Problem}
\theoremstyle{remark}
\newtheorem{remark}{Remark}
\def\wt{{\rm wt}}
\def\inv{{\rm inv}}
\def\T{\mathcal T}
\def\Tab{{\rm Tab}}
\def\Tree{{\rm Tree}}
\def\Perm{{\rm Perm}}
\def\splice{{\rm SG}}
\def\Fib{{\rm Fib}}
\def\qDGG{{\rm qDGG}}
\def\N{{\mathbb N}}
\def\C{{\mathbb C}}
\newlength{\cellsz}
\newcounter{cellsize}
\newcommand{\setcellsize}[1]{%
  \setcounter{cellsize}{#1}%
  \setlength{\cellsz}{\value{cellsize}\unitlength}}%
\newcommand\cellify[1]{\def\thearg{#1}\def\nothing{}%
\ifx\thearg\nothing \vrule width0pt height\cellsz depth0pt\else
\hbox to 0pt{{\begin{picture}(\value{cellsize},\value{cellsize})
  \put(0,0){\line(1,0){\value{cellsize}}}
  \put(0,0){\line(0,1){\value{cellsize}}}
  \put(\value{cellsize},0){\line(0,1){\value{cellsize}}}
  \put(0,\value{cellsize}){\line(1,0){\value{cellsize}}} \end{picture} \hss}}\fi%
\vbox to \cellsz{ \vss \hbox to \cellsz{\hss$#1$\hss} \vss}}
\newcommand\tableau[1]{\vcenter{\vbox{\let\\\cr
\baselineskip -16000pt \lineskiplimit 16000pt \lineskip 0pt
\ialign{&\cellify{##}\cr#1\crcr}}}}
\newcommand\tabl[1]{\vtop{\let\\\cr
\baselineskip -16000pt \lineskiplimit 16000pt \lineskip 0pt
\ialign{&\cellify{##}\cr#1\crcr}}}
\title{Quantized dual graded graphs}
 \author{Thomas Lam}\address
 {Department of Mathematics\\ Harvard University\\ Cambridge\\ MA 02138.}
 \date{\today}
 \email{tfylam@math.harvard.edu}
 \urladdr{http://www.math.harvard.edu/\~{ }tfylam}
 \thanks{T.L. was partially supported by NSF grants DMS-0600677 and DMS-0652641.}
\begin{document}
 \begin{abstract}
We study quantized dual graded graphs, which are graphs equipped
with linear operators satisfying the relation $DU - qUD = rI$. We
construct examples based upon: the Fibonacci poset, permutations,
standard Young tableau, and plane binary trees.
 \end{abstract}
\maketitle
\section{Introduction}
Fomin's {\it dual graded graphs} \cite{Fom} and Stanley's {\it
differential posets} \cite{Sta} are constructions developed to
understand and generalize the enumerative consequences of the
Robinson-Schensted algorithm.  The key relation in these
constructions is $DU - UD = rI$, where $U, D$ are up-down operators
acting on the graphs or posets\footnote{Fomin \cite{Fom} also
considered more general relations of the form $DU = f(UD)$.}.  In
this article we develop some of the basic theory of {\it quantized
dual graded graph}, which are equipped with up-down operators $U, D$
satisfying the $q$-Weyl relation $DU - qUD = rI$.  One of the
motivations for the current work were the signed differential posets
developed in \cite{Lam}, which correspond to the relation $DU + UD =
rI$. Thus quantized dual graded graphs specialize to usual dual
graded graphs at $q = 1$, and to signed differential posets (or
their dual graded graph equivalent) at $q = -1$.

The central enumerative identity in the subject developed by Fomin
and Stanley is
$$
\sum_{\lambda \vdash n} (f^\lambda)^2 = n!
$$
where the sum is over partitions of $n$, and $f^\lambda$ is the
number of standard Young tableau of shape $\lambda$.  The
corresponding analogue (Theorem \ref{thm:main}) for a quantized dual
graded graph $(\Gamma, \Gamma')$ reads
\begin{equation}\label{eq:main}
\sum_{v} f_\Gamma^v(q) \, f_{\Gamma'}^v(q) = r^n [n]_q!
\end{equation}
where the sum is over vertices of height $n$, the polynomials
$f_\Gamma^v(q)$ and $f_{\Gamma'}^v(q)$ are weighted enumerations of
paths in $\Gamma$ and $\Gamma'$, and $[n]_q!$ is the $q$-analogue of
$n!$.

We explicitly construct examples of quantized dual graded graphs and
interpret \eqref{eq:main}.  These examples are based on various
combinatorial objects: the Fibonacci poset, permutations, standard
Young tableau, and plane binary trees. Unfortunately, we have been
unable to quantize Young's lattice.  More examples will be given in
joint work \cite{BLL} with Bergeron and Li, where in some cases a
representation theoretic explanation for the identities $DU - qUD =
I$ and \eqref{eq:main} will be given.

\section{Quantized dual graded graphs}
Let $\Gamma = (V,E,m)$ be a graded graph with edge weights $m(v,w)
\in \N[q]$. That is, $\Gamma$ is a directed graph with a height
function $h: V \to \N$ such that if $(v,w) \in E$ then $h(w) = h(v)
+ 1$.  Furthermore, each edge has a weight $m(v,w) \in \N[q]$ which
is a non-zero polynomial in $q$ with nonnegative coefficients.  We
shall assume that $\Gamma$ is locally finite, so that for each $v$,
there are finitely many edges entering and leaving.  Because each
edge has a weight, we shall assume that there are no multiple edges.

Let $\widehat{\C(q)[V]}$ be the $\C(q)$-vector space of formal
linear combinations of the vertex set $V$.  A linear operator on
$\widehat{\C(q)[V]}$ is {\it continuous} if it is compatible with
arbitrary linear combinations.  Define continuous linear operators
$U, D: \widehat{\C(q)[V]} \to \widehat{\C(q)[V]}$ by
\begin{align*}
U(v) &= \sum_{w: (v,w) \in E} m(v,w)\, w \\
D(w) &= \sum_{v: (v,w) \in E} m(v,w)\, v.
\end{align*}
and extending by linearity and continuity.  We define a pairing
$(.,.): \widehat{\C(q)[V]} \times \C(q)[V] \to \C(q)$ by $(v,w) =
\delta_{v,w}$ for $v, w \in E$.  Then $U$ and $D$ are adjoint with
respect to this pairing.

Let $(\Gamma = (V,E,m), \Gamma' = (V, E', m'))$ be a pair of graded
graphs with the same vertex set.  Then $(\Gamma,\Gamma')$ is a pair
of {\it quantized dual graded graphs} ($\qDGG$ for short) if we have
the identity
\begin{equation}\label{eq:qDGG}
D_{\Gamma'}U_{\Gamma} - qU_\Gamma D_{\Gamma'} = rI
\end{equation}
for some positive integer $r \in \{1,2,3,\ldots\}$, called the {\it
differential coefficient}.  In the sequel, we will often write $U$
and $D$ for $U_\Gamma$ and $D_{\Gamma'}$. When $q = 1$, we obtain
the dual graded graphs of \cite{Fom}, which are equipped with the
relation $DU - UD = rI$.  We should note that Fomin also considered
the more general relation $DU = f(UD)$ for arbitrary functions $f$;
however, he did not focus on \eqref{eq:qDGG} where $q$ is a
parameter.

If $(\Gamma(q),\Gamma'(q))$ are a pair of quantized dual graded
graphs then we say that $(\Gamma(q),\Gamma'(q))$ is a quantization
of $(\Gamma(1),\Gamma'(1))$.  The basic example of a dual graded
graph is Young's lattice of partitions, ordered by containment; see
\cite{Fom, Sta}.  The following is the basic problem for quantized
dual graded graphs.

\begin{problem}
Find a quantization of Young's lattice.
\end{problem}

In \cite{LS}, we constructed dual graded graphs from the strong
(Bruhat) and weak orders of the Weyl group of a Kac-Moody algebra.
The dual graded graphs constructed this way include Young's lattice,
and closely related graphs such as the shifted Young's lattice.

\begin{problem}
Find a quantization of Kac-Moody dual graded graphs.
\end{problem}

\begin{remark}
Equation \eqref{eq:qDGG} specializes to $DU + UD = I$ when $q = -1$
and $r = 1$.  Graphs satisfying this relation were studied in
\cite{Lam}.  More specifically, in \cite{Lam} we studied only such
graphs, called signed differential posets, which arose from labeled
posets. The examples constructed in the present paper can also be
specialized at $q = -1$, giving what would be called ``signed dual
graded graphs''.  The main example in \cite{Lam} was the
construction of a signed differential poset structure on Young's
lattice.  Since we have been unable to quantize Young's lattice, we
have stopped short of explicitly writing the examples in the current
article using the notation in \cite{Lam}.
\end{remark}

\section{$q$-derivatives and enumeration on quantized dual graded graphs}
Let $f(t) = \sum_{n \geq 0} a_n t^n \in \C[[t]]$ be a formal power
series in one variable. Define the {\it $q$-derivative} as follows:
\begin{align*}
f^{q}(t) &= \sum_{n \geq 0} [n]_q a_n t^{n-1}.
\end{align*}
Here $[n]_q = 1 + q + \ldots + q^{n-1}$ denotes the $q$-analogue of
$n$.  We also set $[n]_q! := [n]_q [n-1]_q \cdots [2]_q [1]_q$.  Let
$U, D$ be formal, non-commuting variables satisfying the relation
$DU - qUD = r$.  We assume that $U$ and $D$ commute with the
variable $q$.  The following Lemma explains the relationship between
the relation $DU-qUD=r$ and $q$-derivatives.
\begin{lem}
\label{lem:diff} Suppose $f(U) \in \C[[U]]$ is a formal power series
in the variable $U$.  Then $Df(U) = r\, f^q(U) + f(qU)D$.
\end{lem}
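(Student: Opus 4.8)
The plan is to prove the identity first for monomials $f(U) = U^n$ by induction on $n$, and then extend to arbitrary formal power series by linearity and continuity. The base case $n = 0$ is trivial: $f(U) = 1$ gives $D \cdot 1 = 0 + 1 \cdot D$, matching $r f^q(U) + f(qU)D$ since the $q$-derivative of a constant is $0$. For the inductive step, I would write $U^{n+1} = U^n \cdot U$ and compute $D U^{n+1} = (D U^n) U$, apply the inductive hypothesis to $D U^n$, and then use the single defining relation $DU = qUD + r$ to move the remaining $D$ past the last $U$.

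Carrying this out: by the inductive hypothesis $D U^n = r [n]_q U^{n-1} + (qU)^n D$, where I have used that $(U^n)^q = [n]_q U^{n-1}$ and $f(qU) = (qU)^n = q^n U^n$. Multiplying on the right by $U$ gives $D U^{n+1} = r[n]_q U^n + q^n U^n (DU) = r[n]_q U^n + q^n U^n(qUD + r) = r[n]_q U^n + r q^n U^n + q^{n+1} U^{n+1} D$. Since $[n]_q + q^n = [n+1]_q$ and $q^{n+1} U^{n+1} = (qU)^{n+1}$, this is exactly $r[n+1]_q U^n + (qU)^{n+1} D = r (U^{n+1})^q + (qU)^{n+1} D$, completing the induction.

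Finally, for a general power series $f(U) = \sum_{n \geq 0} a_n U^n$, both sides of the claimed identity are $\C[[U]]$-continuous (termwise-defined) operators, and the $q$-derivative $f^q$ and the substitution $U \mapsto qU$ are both linear in the coefficients $a_n$; so summing the monomial identity against the $a_n$ yields $Df(U) = r \sum_n a_n (U^n)^q + \sum_n a_n (qU)^n D = r f^q(U) + f(qU) D$. I do not anticipate a genuine obstacle here; the only point requiring a little care is the bookkeeping in the inductive step — keeping straight which factors of $q$ come from $(qU)^n$ versus from commuting $D$ past a single $U$ — together with the identity $[n]_q + q^n = [n+1]_q$, which is immediate from the definition $[n]_q = 1 + q + \cdots + q^{n-1}$.
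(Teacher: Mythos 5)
Your proof is correct and follows essentially the same route as the paper: induction on $n$ for the monomials $U^n$, using the relation $DU = qUD + r$ to push $D$ past one more factor of $U$ and the identity $[n]_q + q^n = [n+1]_q$, then extension to general power series by linearity and continuity. The only difference is cosmetic indexing ($n \to n+1$ versus $n-1 \to n$).
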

\begin{proof}
By linearity and continuity it suffices to prove the statement for
$f(U) = U^n$.  For $n = 0$, the formula is trivially true.  The
inductive step follows from the calculation
\begin{align*}
DU^n &= (r [n-1]_qU^{n-2} + q^{n-1}U^{n-1}D)U \\
&= r [n-1]_qU^{n-1} + q^{n-1}U^{n-1}(r + qUD) \\
&=r [n]_qU^{n-1} + q^nU^nD,
\end{align*}
using $[n]_q = [n-1]_q + q^{n-1}$.
\end{proof}


We now suppose $(\Gamma,\Gamma')$ is a $\qDGG$ with a unique minimum
(source) $\emptyset$, which we assume has height $h(\emptyset) = 0$.
Let us denote the weight generating function of paths in $\Gamma$
from $\emptyset$ to a vertex $v \in V$ by $f_\Gamma^v = (U^n
\emptyset, v)$, where $n = h(v)$.  It is not difficult to see that
we have
$$
(D^n U^n \emptyset , \emptyset) = \sum_{v:\; h(v) = n} f_\Gamma^v \,
f_{\Gamma'}^v.
$$
The following is an analogue of \cite[Corollary 1.5.4]{Fom}; see
also \cite{Sta}.
\begin{thm}\label{thm:main}
Let $(\Gamma,\Gamma')$ be a $\qDGG$ with a unique minimum
$\emptyset$. Then
$$
\sum_{v: \; h(v) = n} f_\Gamma^v \, f_{\Gamma'}^v = r^n [n]_q!.
$$
\end{thm}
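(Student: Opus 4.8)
The plan is to show $D^n U^n \emptyset = r^n [n]_q! \, \emptyset$ as an identity in $\widehat{\C(q)[V]}$, and then pair with $\emptyset$ and use the displayed formula $(D^n U^n \emptyset, \emptyset) = \sum_{v: h(v)=n} f_\Gamma^v f_{\Gamma'}^v$ stated just before the theorem. Since $U$ and $D$ satisfy the relation $D U - q U D = r I$ as operators on $\widehat{\C(q)[V]}$ (that is the definition of a $\qDGG$, equation \eqref{eq:qDGG}, with $r$ a scalar), they satisfy exactly the algebraic hypotheses under which Lemma \ref{lem:diff} was proved; I would apply that lemma with $f(U) = U^n$. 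Concretely, Lemma \ref{lem:diff} for $f(U)=U^n$ gives $D U^n = r [n]_q U^{n-1} + q^n U^n D$.

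The main step is an induction on $n$. For $n = 0$ the claim $D^0 U^0 \emptyset = \emptyset = r^0 [0]_q! \,\emptyset$ is trivial. For the inductive step, write $D^n U^n \emptyset = D^{n-1} (D U^n) \emptyset = D^{n-1}\bigl(r[n]_q U^{n-1} + q^n U^n D\bigr)\emptyset$. Because $\emptyset$ is the unique minimum of height $0$, it has no edges entering it in $\Gamma'$, so $D\emptyset = 0$; hence the second term vanishes after we note $D$ commutes with the scalar $q^n$ and with $U^n$ only up to the relation already used — but here $D$ is applied directly to $\emptyset$, killing that term outright. We are left with $D^n U^n \emptyset = r[n]_q \, D^{n-1} U^{n-1} \emptyset$, and by the inductive hypothesis $D^{n-1}U^{n-1}\emptyset = r^{n-1}[n-1]_q!\,\emptyset$, so $D^n U^n \emptyset = r^n [n]_q [n-1]_q!\, \emptyset = r^n [n]_q!\, \emptyset$, completing the induction.

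Finally, taking the pairing with $\emptyset$ and using $(\emptyset,\emptyset) = 1$ together with the displayed identity preceding the theorem yields
\begin{equation*}
\sum_{v:\; h(v) = n} f_\Gamma^v \, f_{\Gamma'}^v = (D^n U^n \emptyset, \emptyset) = r^n [n]_q!,
\end{equation*}
as desired. The only point requiring a little care — and the closest thing to an obstacle — is justifying that $DU - qUD = rI$ as operators really does let us invoke Lemma \ref{lem:diff}, which was stated for formal non-commuting variables: this is immediate since the operators $U_\Gamma$ and $D_{\Gamma'}$ are continuous, commute with $q$, and satisfy precisely that relation, so every manipulation in the proof of the lemma is valid verbatim in the operator setting. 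One should also confirm $D\emptyset = 0$, which follows from $\emptyset$ being the unique source at height $0$, so that $U^n\emptyset$ is a combination of vertices of height $n$ and the telescoping in the induction is legitimate.
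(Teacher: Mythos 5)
Your proof is correct and is essentially identical to the paper's: both apply Lemma \ref{lem:diff} with $f(U)=U^n$ to get $D^nU^n\emptyset = r[n]_q D^{n-1}U^{n-1}\emptyset$ (using $D\emptyset=0$), induct, and then pair with $\emptyset$. You simply spell out the routine justifications that the paper leaves implicit.
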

\begin{proof}
By Lemma \ref{lem:diff} we have
\begin{align*}
D^n U^n \, \emptyset= D^{n-1} (r[n]_qU^{n-1} + q^nU^nD)\, \emptyset
= r[n]_q D^{n-1}U^{n-1} \emptyset
\end{align*}
from which the result follows by induction.
\end{proof}

More generally, let $f(\emptyset \to v \to w)$ denote the weight
generating function of paths beginning at $\emptyset$, going up to
$v$ in $\Gamma$, then going down to $w$ in $\Gamma'$.  For fixed $w$
with $h(w) = m \leq n$, we then have
$$
\sum_{v: \; h(v) = n} f(\emptyset \to v \to w) = (D^{n-m}U^n \,
\emptyset,w) = r^{n-m}([n]_q [n-1]_q \cdots [m+1]_q) \,f^w_\Gamma.
$$
Other path generating function problems can be solved by studying
the ``normal ordering problem'' for the relation $DU - qUD = r$,
that is, the problem of rewriting a word in the letters $U$ and $D$
as a linear combination of terms $U^iD^j$.  We shall not pursue this
direction here, but see for example \cite{Var}.

\section{$q$-reflection}
Let $(\Gamma_n = (V,E,m),\Gamma'_n = (V,E',m'))$ be a pair of graded
graphs with height function taking values in $[0,n]$, and such that
\eqref{eq:qDGG} holds for some fixed $r$, when applied to all
vertices $v$ such that $h(v) < n$.  We call such a pair a {\it
partial $\qDGG$} of height $n$. We will construct a partial $\qDGG$
$(\Gamma_{n+1},\Gamma'_{n+1})$ of height $n+1$, and such that they
agree with $(\Gamma_n,\Gamma'_n)$ up to height $n$.

Let us write $V_i = \{v \mid h(v) = i\}$.  The height $n+1$ vertices
of (both) $\Gamma_{n+1}$ and $\Gamma'_{n+1}$ will be given by the
set $V_{n+1} = \{v^1, v^2 ,\ldots, v^r \mid v \in V_n\} \cup \{w'
\mid w \in V_{n-1}\}$.  There will be two kinds of edges.  For
$\Gamma_{n+1}$, we construct
\begin{enumerate}
\item
$r$ edges $(v,v^{1})$, $(v,v^2), \ldots, (v,v^r)$ for each $v \in
V_n$ which have weight $m(v,v'):= 1$.
\item
An edge $(v,w')$ for each edge $(w,v)$ of $\Gamma'$, where $v \in
V_n$ and $w \in V_{n-1}$.  This edge has weight $m(v,w'):=
q\,m'(w,v)$.
\end{enumerate}
And for $\Gamma'_{n+1}$, we construct
\begin{enumerate}
\item
$r$ edges $(v,v^{1})$, $(v,v^2), \ldots, (v,v^r)$ for each $v \in
V_n$ which have weight $m'(v,v'):= 1$.
\item
An edge $(v,w')$ for each edge $(w,v)$ of $\Gamma'$, where $v \in
V_n$ and $w \in V_{n-1}$.  This edge has weight $m'(v,w'):= m(w,v)$.
\end{enumerate}

We omit the proof of the following, which is the same as the
corresponding result for differential posets \cite{Sta} or signed
differential posets \cite{Lam}.
\begin{prop}\label{prop:refl}
Suppose $(\Gamma_n,\Gamma'_n)$ is a partial $\qDGG$ of height $n$
and differential coefficient $r$. Then
$(\Gamma_{n+1},\Gamma'_{n+1})$ is a partial $\qDGG$ of height $n+1$
and differential coefficient $r$. Furthermore, $(\Gamma,\Gamma') =
\lim_{n \to \infty} (\Gamma_n,\Gamma'_n)$ is a well-defined $\qDGG$
with differential coefficient $r$.
\end{prop}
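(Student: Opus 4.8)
The plan is to verify the $q$-Weyl relation \eqref{eq:qDGG} directly on the height-$n$ vertices of $(\Gamma_{n+1},\Gamma'_{n+1})$, since by hypothesis it already holds below height $n$ and the construction does not alter $(\Gamma_n,\Gamma'_n)$ there. So fix $v \in V_n$ and compute $(D_{\Gamma'_{n+1}}U_{\Gamma_{n+1}} - qU_{\Gamma_{n+1}}D_{\Gamma'_{n+1}})(v)$. First I would expand $U_{\Gamma_{n+1}}(v)$: it equals $\sum_{i=1}^r v^i + q\sum_{(w,v)\in E'} m'(w,v)\, w'$, the first sum coming from the $r$ new ``straight'' edges of weight $1$ and the second from the type-(2) edges of weight $q\,m'(w,v)$. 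Applying $D_{\Gamma'_{n+1}}$ to this, each $v^i$ contributes $v$ back with weight $1$ (giving $rv$ total from this block, which is exactly where the $rI$ term comes from), while each $w'$ sends us down via the $\Gamma'_{n+1}$-weight $m(w',\cdot)$ — here the relevant type-(2) edges of $\Gamma'_{n+1}$ have weight $m(w,v)$ — so $D_{\Gamma'_{n+1}}(w') = \sum_{u:(u,w)\in E'} m'(u,w)\, u^{?}\ldots$; I need to track carefully that $D_{\Gamma'_{n+1}}(w')$ picks up the height-$n$ vertices $u$ with $(w,u)\in E$ in $\Gamma$ (weight $m(w,u)$) together with the lower new vertices. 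The second term $qU_{\Gamma_{n+1}}D_{\Gamma'_{n+1}}(v)$ uses $D_{\Gamma'_{n+1}}(v) = \sum_{(u,v)\in E', h(u)=n-1} m'(u,v)\, u$ (edges internal to $\Gamma'_n$, unchanged), and then $U_{\Gamma_{n+1}}$ is applied to each such height-$(n-1)$ vertex $u$, which by the construction at the previous stage expands into height-$n$ vertices.

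The key step is then to match the two sides term by term on $V_n$ and on $V_{n-2}$ (the only heights that survive after $DU$ or $UD$ applied to a height-$n$ vertex). On $V_n$, after cancelling the $rv$ we must see that the remaining $V_n$-coefficients of $D_{\Gamma'_{n+1}}U_{\Gamma_{n+1}}(v)$ and of $qU_{\Gamma_{n+1}}D_{\Gamma'_{n+1}}(v)$ agree; this amounts to the identity $\sum_{w\in V_{n-1}} q\,m'(w,v)m(w,u) = q\sum_{w\in V_{n-1}} m'(w,v)m(w,u)$ for $u\ne v$ and the analogous statement with the diagonal correction — in other words it reduces to the partial-$\qDGG$ relation $(D_{\Gamma'}U_\Gamma - qU_\Gamma D_{\Gamma'})(v)$ evaluated at height $n-1$, which holds by hypothesis since $h(v)=n$ means we are using the relation at vertices of height $n-1 < n$. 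Wait — more precisely: the coefficient bookkeeping is arranged exactly so that the ``$q$-twist'' factors (the $q$ on the $U_{\Gamma_{n+1}}$ type-(2) edge and the absence of a $q$ on the $U_{\Gamma_{n+1}}$ straight edges, versus their absence on $\Gamma'_{n+1}$) conspire to reproduce \eqref{eq:qDGG} one level down. On $V_{n-2}$ the two sides should simply be equal with no $rI$ contribution, again reducing to the inductive hypothesis. This is the routine-but-delicate normal-ordering computation that I expect to be the main obstacle; the paper sidesteps it by citing the parallel arguments in \cite{Sta} and \cite{Lam}, and indeed the only new feature here is inserting the factor $q$ on precisely one family of edge weights, which is forced by requiring the relation to acquire its $q$ exactly when $U$ passes $D$.

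Finally, for the limiting statement: since $(\Gamma_{n+1},\Gamma'_{n+1})$ restricts to $(\Gamma_n,\Gamma'_n)$ on vertices of height $\le n$ and edges among them, the system $\{(\Gamma_n,\Gamma'_n)\}_{n\ge 0}$ is directed under inclusion, so $(\Gamma,\Gamma') := \varinjlim (\Gamma_n,\Gamma'_n)$ is a well-defined pair of graded graphs on $V = \bigcup_n V_n$. Local finiteness is clear because each $v\in V_n$ has exactly $r$ new up-edges plus finitely many up-edges to $V_{n-1}$-indexed new vertices, and finitely many down-edges. The relation \eqref{eq:qDGG} at any vertex $v$ only involves vertices of height $h(v)\pm 1$ and edges among heights $h(v)-1, h(v), h(v)+1$, all of which are already present and stable in $(\Gamma_{n+1},\Gamma'_{n+1})$ for any $n \ge h(v)+1$; hence the relation holds at every $v\in V$, and $(\Gamma,\Gamma')$ is a $\qDGG$ with differential coefficient $r$. $\qed$
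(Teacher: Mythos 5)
The paper actually omits the proof of this proposition, deferring to the analogous arguments in \cite{Sta} and \cite{Lam}, so a direct verification is the right thing to attempt, and the skeleton of yours is correct: the relation below height $n$ is inherited because the construction changes nothing there, the only new check is at height-$n$ vertices, and the limit statement follows because the relation at a vertex involves only a bounded neighborhood that stabilizes. Two details in your write-up are wrong, however, though neither is fatal. First, $D_{\Gamma'}U_\Gamma$ and $U_\Gamma D_{\Gamma'}$ both preserve height, so for $v\in V_n$ everything lives in $V_n$; there is no $V_{n-2}$ component to match (that would appear only in words like $D^2U$). Second, the height-$n$ verification does not ``reduce to the partial $\qDGG$ relation at height $n-1$'' and requires no inductive hypothesis at all. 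Writing it out: $U_{\Gamma_{n+1}}(v)=\sum_i v^i+q\sum_{w:(w,v)\in E'}m'(w,v)\,w'$, the $v^i$ return $rv$ under $D_{\Gamma'_{n+1}}$, and $D_{\Gamma'_{n+1}}(w')=\sum_{u:(w,u)\in E}m(w,u)\,u$, so the $w'$-terms contribute $q\sum_{w}m'(w,v)\sum_{u}m(w,u)\,u$, which is \emph{literally} $qU_{\Gamma_n}D_{\Gamma'_n}(v)=qU_{\Gamma_{n+1}}D_{\Gamma'_{n+1}}(v)$, since neither the down-edges from $v$ nor the up-edges from $V_{n-1}$ into $V_n$ have changed. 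The identity you display is a tautology (factor out $q$); that is precisely the point of putting the factor $q$ into $m(v,w')=q\,m'(w,v)$ and the plain up-weight into $m'(v,w')=m(w,v)$ --- the new up-then-down paths through $w'$ equal $q$ times the old down-then-up paths through $w$, and the $r$ new covers $v^i$ supply $rI$.

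Two smaller points: $D_{\Gamma'_{n+1}}(w')$ is supported entirely on the old vertices of $V_n$, so the phrase ``together with the lower new vertices'' is spurious; and you are implicitly (and correctly) reading the type-(2) edges of $\Gamma'_{n+1}$ as indexed by edges $(w,v)$ of $\Gamma$ rather than of $\Gamma'$, which is what makes the weight $m(w,v)$ well defined --- worth stating explicitly. With these corrections your argument is exactly the standard reflection argument the paper has in mind.
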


\section{The quantized Fibonacci poset}
Let $(\Gamma, \Gamma')$ be a $\qDGG$.  If the edge sets of $\Gamma$
and of $\Gamma'$ are identical and in addition every edge weight
$m(v,w)$ (and $m'(v,w)$) of $\Gamma$ (and $\Gamma'$) is a single
power $q^i$ then we call $(\Gamma, \Gamma')$ a {\it quantized
differential poset}.  For then, $\Gamma(1)$ would be a differential
poset in the sense of Stanley \cite{Sta}.

\begin{remark}
We could insist that $\Gamma = \Gamma'$ as graded graphs, but then
in the construction of a quantization of the Fibonacci differential
posets we would need to use half powers of $q$.
\end{remark}

Define $\Gamma_0 = \Gamma'_0$ to be the graded graph with a single
vertex $\emptyset$ with height 0. For each $r \in \{1,2,\ldots\}$,
we define the {\it quantized $r$-Fibonacci poset} to be the
corresponding $\qDGG$ $(\Fib_{(r)},\Fib'_{(r)})$ obtained from
$(\Gamma_0,\Gamma'_0)$ via Proposition \ref{prop:refl}.  The $\qDGG$
$(\Fib_{(r)},\Fib'_{(r)})$ is a quantization of the Fibonacci
differential poset of Stanley \cite{Sta}, or the Young-Fibonacci
graph of Fomin \cite{Fom}.  We now describe
$(\Fib_{(r)},\Fib'_{(r)})$ explicitly, suppressing the parameter $r$
in most of the notation.

The vertex set $V$ of $(\Fib_{(r)},\Fib'_{(r)})$ consists of words
$w$ in the letters $1_1, 1_2, \ldots, 1_r, 2$ with height function
given by summing the letters in the word (all the $1$'s have the
same value).  In the notation of the $q$-reflection algorithm, the
vertices $v^1,\ldots, v^r$ are obtained from $v$ by prepending $1_1,
1_2, \ldots, 1_r$ respectively; the vertices $w'$ are obtained from
$w$ by prepending the letter $2$.  The edges $(v,w)$ are of one of
the two forms:
\begin{enumerate}
\item
$v$ is obtained from $w$ by removing the first $1$ (one of the
letters $1_1, 1_2, \ldots, 1_r$);
\item
$v$ is obtained from $w$ by changing a $2$ to one of the $1$'s, such
that all letters to the left of this $2$ is also a $2$.
\end{enumerate}
In either case, let $s(v,w)$ denote the number of letters preceding
the letter which is changed or removed to go from $w$ to $v$.  The
edges $m(v,w)$ of form (1) have edge weight $m(v,w) = m'(v,w)
q^{s(v,w)}$ in both $\Fib_{(r)}$ and $\Fib'_{(r)}$.  The edges of
form (2) have edge weight $m(v,w) = q^{s(v,w)+1}$ in $\Fib_{(r)}$,
and edge weight $m'(v,w) = q^{s(v,w)}$ in $\Fib'_{(r)}$.

For the rest of this section, we will restrict ourselves to $r = 1$,
and write $1$ instead of $1_1$.  We now describe the weight of a
path from $\emptyset$ to a word $w$ in $\Fib = \Fib_{(1)}$ or $\Fib'
= \Fib'_{(1)}$. Given a word $w \in \Fib$ one has a {\it snakeshape}
(\cite{Fom}) consisting of a series of columns of height one or two.
For example, for $w = 21121$ we have the shape
$$ \tableau{{}&&&{}&\\{}&{}&{}&{}&{}}.$$

Given such a {\it snakeshape} $\lambda$, following Fomin \cite{Fom}
we say that a Young-Fibonacci-tableau of shape $\lambda$ is a
bijective filling of $\lambda$ with the numbers $\{1,2\ldots,n\}$ so
that:
\begin{enumerate}
\item
In any height two column the lower number is smaller.
\item
To the right of a height two column containing the numbers $a$ and
$b$ none of the numbers in $[a,b]$ occur.
\item
To the right of a height one column containing the number $a$, no
numbers greater than $a$ occur.
\end{enumerate}
For each number $i \in \{1,2\ldots,n\}$, let $p_i(T)$ denote the
position of the column containing $i$ in $T$, counting from the left
with the leftmost column being 0.  Then set
\begin{align*}
\wt(T) = \prod_{i \in {\rm lower \,\,row}} q^{p_i(T)}  \prod_{i \in
{\rm upper \,\,row}} q^{p_i(T) + 1} \ \ \ \ {\rm and} \ \ \ \
\wt'(T) = \prod_{i} q^{p_i(T)}.
\end{align*}

Fomin \cite{Fom} described a bijection between
Young-Fibonacci-tableau $T$ of shape $\lambda = \lambda(w)$ and
paths from $\emptyset$ to $w$ in $\Fib$ (or $\Fib'$).  For example,
the tableau
$$ \tableau{{3}&&&{5}&\\{2}&{7}&{6}&{4}&{1}}$$
corresponds to the path $\emptyset \to 1 \to 11 \to 21 \to 211 \to
221 \to 2121 \to 21121$.
\begin{lem}
Under this bijection the weight of path is equal to $\wt(T)$ in
$\Fib$, and equal to $\wt'(T)$ in $\Fib'$.
\end{lem}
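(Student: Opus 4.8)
The plan is to verify the claim by induction on $n = h(w)$, tracking how Fomin's bijection builds a path step by step and checking that each step multiplies the running path-weight by exactly the edge weight predicted by the $q$-reflection rules. Recall that Fomin's bijection sends a Young-Fibonacci-tableau $T$ of shape $\lambda(w)$ to a path $\emptyset = w_0 \to w_1 \to \cdots \to w_n = w$ by successively deleting the entries $n, n-1, \ldots, 1$: removing entry $i$ from $T$ produces a tableau $T^{(i)}$ of smaller shape, and $w_{i-1} = \lambda(w) $ with entries $> i$ removed. So the first thing I would do is make precise the recursive structure: if $k$ is the position (column, counted from $0$ as in the paper) of the cell containing $n$ in $T$, and $T'$ is the tableau obtained by deleting $n$, then $\wt(T) = q^{\epsilon} \, q^{?} \cdot \wt(T')$ where the correction term records exactly how deleting the last column (or lowering a two-column to a one-column) changes the word $w$, and hence which edge $(w_{n-1}, w_n)$ of $\Fib$ is traversed.

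The key case analysis is the two edge types in the description of $(\Fib,\Fib')$. \emph{Type (1)}: the entry $n$ sits in a height-one column, which by condition (3) must be the rightmost column, so deleting it removes the first letter $1$ of $w$ (reading $w$ right-to-left as the columns go left-to-right — I would pin down this orientation convention early, matching the displayed example $w=21121$). Then $s(w_{n-1},w_n) = 0$ by definition — wait, more carefully: $s$ counts letters of $v = w_{n-1}$ preceding the changed letter, and I must reconcile this with $p_n(T)$. \emph{Type (2)}: the entry $n$ sits in the upper row of a height-two column; by condition (2) this column, together with everything to its right, involves only the largest numbers, and deleting $n$ turns that $2$ into a $1$ with all-$2$'s to its left. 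Here $p_n(T)$ (the position of the $2$-column) equals $s(w_{n-1},w_n)$. So in Type (1) the edge weight is $q^{s}$ in both graphs and the tableau factor contributed by $n$ is $q^{p_n(T)}$ for $\wt$ and likewise $q^{p_n(T)}$ for $\wt'$; in Type (2) the edge weight is $q^{s+1}$ in $\Fib$ versus $q^{s}$ in $\Fib'$, matching the $\wt$-factor $q^{p_n(T)+1}$ (upper row!) versus the $\wt'$-factor $q^{p_n(T)}$.

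The induction then closes as follows: the remaining entries $1, \ldots, n-1$ form a valid Young-Fibonacci-tableau $T'$ of shape $\lambda(w_{n-1})$, the positions $p_i(T')$ for $i < n$ either coincide with $p_i(T)$ or are shifted in a controlled way when the rightmost column was deleted — and I would check that this shift is invisible to the weights because only columns strictly to the left of a deleted rightmost column matter, and those positions are unchanged. By the inductive hypothesis the path $w_0 \to \cdots \to w_{n-1}$ has weight $\wt(T')$ in $\Fib$ and $\wt'(T')$ in $\Fib'$, and multiplying by the edge weight just computed gives $\wt(T)$ and $\wt'(T)$ respectively.

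The main obstacle I anticipate is purely bookkeeping: getting the column-position conventions, the left-to-right reading of $w$ versus the order of letter deletion, and the definition of $s(v,w)$ all mutually consistent, especially in Type (1) where the $+1$ versus $+0$ discrepancy between $\wt$ and $\wt'$ has to line up with "upper row vs lower row" rather than with the edge type. I would handle this by carefully working through the displayed example $w = 21121$ with its tableau, confirming each of the seven edges in the exhibited path, and using that as the template for the general argument. Once the conventions are fixed the case analysis is short, since there are only two edge types and in each the tableau weight factor contributed by the maximal entry is manifestly the corresponding edge weight.
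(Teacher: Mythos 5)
Your overall strategy --- induct on $n$ by peeling off the largest entry and matching its contribution to the weight of the last edge of the path --- is the intended one (the paper itself dismisses the proof as a straightforward check against Fomin's description of the bijection), and your identifications $p_n(T)=s(w_{n-1},w_n)$ and ``upper row $\leftrightarrow$ type (2) edge'' are correct. But your Type (1) analysis contains a genuine error. Condition (3) applied to the column of $n$ is vacuous; what conditions (2) and (3) applied to the \emph{other} columns force is that every column strictly to the left of $n$'s column has height two. So in Type (1) the entry $n$ sits in the \emph{leftmost} height-one column (matching ``remove the first $1$,'' since the prefix consists of $2$'s), not in the rightmost column --- see the paper's own example, where $7$ occupies position $1$ of a five-column tableau. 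Consequently deleting $n$ removes a column that in general has columns to its right, all of which then shift one position to the left, so the positions of the remaining entries are \emph{not} unchanged and the shift is \emph{not} invisible to the weights.

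This is not mere bookkeeping: if $p_i(T)$ is read as the position of $i$'s column in the final tableau $T$, the lemma is false. Indeed $\sum_i p_i(T)$ would then equal the sum of the column indices over all cells of the shape, so $\wt'(T)$ would depend only on $\lambda(w)$ and $\sum_T \wt'(T)$ would be a monomial, whereas for example $f^{21}_{\Fib'}=1+q$. The statement holds only if $p_i(T)$ is understood as the position of the column containing $i$ in the restriction of $T$ to the entries $\{1,\dots,i\}$, i.e.\ at the moment $i$ is inserted along the path. With that reading your induction closes immediately: $p_i(T)=p_i(T')$ for $i<n$ holds by definition, and the only new factor is the one contributed by $n$, which you match correctly with the edge weight in both cases and in both graphs. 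You should make this interpretation explicit and drop the ``rightmost column'' claim; the sanity check you yourself propose on the displayed example $w=21121$ would have exposed both points.
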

\begin{proof}
This is straightforward, using the description of the bijection on
\cite[p.394]{Fom}.
\end{proof}
Thus we have $f_\Fib^w = \sum_T \wt(T)$ and $f_{\Fib'}^w = \sum_T
\wt'(T)$ where the sum is over Young-Fibonacci tableau with shape
$\lambda(w)$. It is not clear whether there is a simple way to write
the identity that results from Theorem \ref{thm:main}.

\section{The $\qDGG$ on permutations}
Let $V = \sqcup_{n\geq 0} S_n$ be the disjoint union of all
permutations equipped with the height function $h(w) = n$ if $w \in
S_n$.  Define a graded graph $\Perm$ with vertex set $V$ and edge
set $E$ consisting of edges $(v,w)$ whenever $v \in S_{n-1}$ is
obtained from $w \in S_n$ by deleting the letter $n$; define
$m(v,w):= q^{n-s}$, where $1 \leq s \leq n$ is the position of the
letter $n$ in $w$. Define $\Perm'$ with the same vertex set and
edges $(v,w)$ whenever $v \in S_{n-1}$ is obtained from $w \in S_n$
by deleting the first letter, followed by reducing all letters
greater than the deleted letter by one; define $m(v,w):= 1$ always.

For example, in $\Perm$ there is an edge from $4123$ to $41523$ with
weight $q^3$.  In $\Perm'$ there is an edge from $1423$ to $41523$
with weight $1$.  The following result is a straightforward
verification of the definitions.
\begin{prop}
The pair $(\Perm,\Perm')$ is a $\qDGG$ with differential coefficient
$r = 1$.
\end{prop}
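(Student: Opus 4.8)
The plan is to verify the defining relation $D_{\Perm'}U_{\Perm} - q\,U_{\Perm}D_{\Perm'} = I$ by evaluating both sides on an arbitrary vertex $v \in S_{n-1}$ and comparing the resulting formal sums of vertices in $S_{n-1}$. Applying $U_\Perm$ to $v$ produces $\sum_{w} q^{n - s(w)} w$, summed over all $w \in S_n$ obtained from $v$ by inserting the letter $n$ into one of the $n$ available slots, where $s(w)$ is the position of $n$ in $w$. Then $D_{\Perm'}$ deletes the first letter of each such $w$ (and renormalizes the remaining letters downward). Dually, $D_{\Perm'}$ applied to $v$ first deletes the first letter of $v$, and then $U_\Perm$ reinserts an $n$. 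The main task is to set up a bijective bookkeeping of the terms on both sides.

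First I would fix $v = v_1 v_2 \cdots v_{n-1} \in S_{n-1}$ and carefully enumerate the terms of $D_{\Perm'}U_\Perm(v)$. Inserting $n$ into position $s$ (for $s = 1, \ldots, n$) gives a word $w^{(s)} \in S_n$ with weight $q^{n-s}$; applying $D_{\Perm'}$ to $w^{(s)}$ deletes its first letter and reduces. There are two cases: if $s \geq 2$, the first letter of $w^{(s)}$ is $v_1$, so deleting it yields (after reduction) a permutation in $S_{n-1}$ that still contains the letter $n-1$ somewhere — in fact it is obtained from $v' := (\text{delete first letter of }v, \text{reduce})$ by inserting $n-1$ into position $s-1$. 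If $s = 1$, the first letter of $w^{(1)}$ is $n$ itself, and deleting it (with reduction, which does nothing since $n$ was the max) returns exactly $v$, contributing $q^{n-1}\cdot v$. Collecting the $s \geq 2$ terms gives $\sum_{t=1}^{n-1} q^{n-1-t}\,(\text{insert } n-1 \text{ into position } t \text{ of } v')$, with the index shift $t = s-1$; this is precisely $q \cdot U_\Perm D_{\Perm'}(v)$, since $U_\Perm D_{\Perm'}(v) = U_\Perm(v') = \sum_{t} q^{(n-1)-t}(\ldots)$ and multiplying by $q$ matches the exponent $n-1-t = (n-2-t)+1$... wait — I would double-check the exponent arithmetic here, as $U_\Perm$ acting on a permutation of $S_{n-1}$ uses weight $q^{(n-1)-t}$ with $t$ ranging over $1,\ldots,n-1$, so $q\cdot U_\Perm D_{\Perm'}(v)$ has exponents $q^{n-t}$, whereas the $s\geq 2$ terms above carry $q^{n-s} = q^{n-1-t}$. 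The resolution is that the weight in $\Perm'$ is identically $1$, so $D_{\Perm'}$ contributes no power of $q$, and the reindexing $t=s-1$ together with the single factor of $q$ should reconcile $q^{n-s}$ with $q\cdot q^{(n-1)-t} = q^{n-t} = q^{n-1-s+1}$... — this is exactly the routine exponent-matching computation that the "straightforward verification" refers to, and I would carry it out explicitly to confirm the shift is consistent.

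Once the $s \geq 2$ terms are identified with $q\,U_\Perm D_{\Perm'}(v)$ and the $s = 1$ term is isolated, I would be left with $D_{\Perm'}U_\Perm(v) - q\,U_\Perm D_{\Perm'}(v) = q^{n-1}\cdot v$, which is off from the desired $I\cdot v = v$ by a factor of $q^{n-1}$. This discrepancy signals that I have mis-tracked one of the $q$-weights — most likely the weight $m(v,w) = q^{n-s}$ in $\Perm$ should be paired with an off-by-one in how deletion of the first letter interacts with the position count, or the $s=1$ insertion should actually be weighted $q^{n-n} = q^0 = 1$. I would recheck the convention "$s$ is the position of $n$ in $w$" against the worked example ($4123 \to 41523$ with weight $q^3$, where $n = 5$ sits in position $3$ of the $5$-letter word, giving $q^{5-3} = q^2 \neq q^3$, so in fact the weight must be $q^{n-s}$ with a different position convention, or $q^{s'}$ where $s'$ counts from the right): here $5$ has two letters to its right, yet the weight is $q^3$, meaning three letters lie to its left, so $m(v,w) = q^{(\text{number of letters left of } n)} = q^{s-1}$ where $s$ is the one-indexed position. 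With this corrected reading, the $s = 1$ insertion (n in front) carries weight $q^0 = 1$ and yields $1 \cdot v$, while the reindexing of the remaining terms lines up cleanly with $q\,U_\Perm D_{\Perm'}$.

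The main obstacle, then, is purely notational: pinning down the exact position/weight conventions so that the telescoping of exponents works out, and handling the reduction step (subtracting one from letters exceeding the deleted one) without sign or index errors — particularly checking that deleting the first letter of $w^{(s)}$ for $s\geq 2$ and reducing genuinely commutes with the insert-then-reduce description of $v'$, i.e. that the operations "insert $n$ then delete-and-reduce first letter" and "delete-and-reduce first letter then insert $n-1$" agree on the nose. Everything else — that all produced words are valid permutations of the right size, that no two insertions collide — is immediate. I would organize the writeup as: (i) normalize conventions, (ii) compute $D_{\Perm'}U_\Perm(v)$ termwise and split off the $s=1$ term, (iii) match the rest with $q\,U_\Perm D_{\Perm'}(v)$ via the bijection $w^{(s)} \leftrightarrow$ (insert $n-1$ at position $s-1$ in $v'$), (iv) conclude the relation with $r = 1$, and (v) invoke that the height function and edge weights manifestly satisfy the graded-graph axioms.
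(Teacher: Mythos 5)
Your verification is precisely the ``straightforward'' computation the paper omits, and its skeleton is right: for $v\in S_{n-1}$, the $n$ terms of $D_{\Perm'}U_{\Perm}(v)$ split into the $s=1$ term (insert $n$ in front, delete it again, recovering $v$) and the $s\geq 2$ terms, which match the terms of $q\,U_{\Perm}D_{\Perm'}(v)$ bijectively via the commutation ``insert $n$ at position $s$, then delete-and-reduce the first letter'' $=$ ``delete-and-reduce the first letter, then insert $n-1$ at position $s-1$.'' You are also right that the exponents only telescope if the weight is $q^{s-1}$ (the number of letters to the left of $n$) rather than the stated $q^{n-s}$: with $q^{n-s}$ the surviving term is $q^{n-1}v$ and the cross terms are off by one power of $q$, exactly as you found (e.g.\ $D_{\Perm'}U_{\Perm}-qU_{\Perm}D_{\Perm'}$ applied to $12$ gives $(1-q+q^2)\cdot 12+(q-q^2)\cdot 21$ under the literal definition), so the definition as printed does not satisfy the relation and your emendation is the one that makes the proposition true. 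Two caveats. First, your calibration against the worked example miscounts: in $41523$ the letter $5$ has exactly two letters on each side, so the displayed weight $q^3$ is consistent with neither $q^{n-s}$ nor $q^{s-1}$ (both give $q^2$); the example is itself erroneous and cannot be used to fix the convention --- only the relation can, which is what your computation effectively does. Second, with $m=q^{s-1}$ the path weight becomes $f^w_{\Perm}=q^{\binom{n}{2}-\inv(w)}$ rather than $q^{\inv(w)}$ as asserted just after the proposition; Theorem~\ref{thm:main} still reduces to $\sum_w q^{\inv(w)}=[n]_q!$ via the symmetry $\inv(w)\leftrightarrow\binom{n}{2}-\inv(w)$, but if one insists on $f^w_{\Perm}=q^{\inv(w)}$ one should instead keep $m=q^{n-s}$ and let $\Perm'$ delete the \emph{last} letter (which also satisfies the relation but contradicts the paper's $\Perm'$ example). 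Either repair works, and your plan --- normalize conventions, split off $s=1$, match the rest --- then goes through with no further obstacles.
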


Let $\inv(w)$ denote the number of inversions of a permutation $w$.
For the pair $(\Perm,\Perm')$, we have
$$
f^w_\Perm = q^{\inv(w)}  \ \ \ \ \ {\rm and} \ \ \ \ \  f^w_{\Perm'}
= 1.
$$
Thus Theorem \ref{thm:main} expresses the identity (see \cite{EC1})
\begin{equation} \label{eq:inv}
\sum_{w \in S_n} q^{\inv(w)} = [n]_q!.
\end{equation}

\section{The $\qDGG$ on tableaux}
Let $Y_n$ denote the set of standard Young tableau $P$ of size $n$
with any shape (see \cite{EC2}).  We assume the reader is familiar
with tableaux, and with Schensted insertion.

Let $V = \cup_{i \geq 0} Y_i$ with the obvious height function.
Define $\Tab$ to be the graded graph with vertex set $V$, and edges
$(P,P') \in Y_n \times Y_{n+1}$ whenever there is some $k \in
\{1,2,\ldots,n+1\}$ so that $P'$ is obtained from $P$ by first
increasing the numbers greater than or equal to $k$ inside $P$ by
$1$, and then Schensted inserting $k$; declare $m(P,P') =
q^{n+1-k}$. Define $\Tab'$ to be the graded graph with vertex set
$V$ and edges $(P,P') \in Y_n \times Y_{n+1}$ whenever $P'$ is
obtained from $P$ by removing $n$; declare $m(P,P') = 1$. The
following result is straightforward.

\begin{prop}\label{prop:tab}
The pair $(\Tab,\Tab')$ is a $\qDGG$ with differential coefficient
$r = 1$.
\end{prop}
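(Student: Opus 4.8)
The plan is to verify the defining relation \eqref{eq:qDGG} with $r=1$ directly, by applying $D_{\Tab'}U_{\Tab}$ and $q\,U_{\Tab}D_{\Tab'}$ to an arbitrary $P\in Y_n$ and comparing the two resulting linear combinations of tableaux term by term. First I would unwind the operators. For $P\in Y_n$ let $\iota_k(P)\in Y_{n+1}$ denote the tableau obtained from $P$ by raising every entry $\ge k$ by one and then Schensted-inserting $k$, so that $U_{\Tab}(P)=\sum_{k=1}^{n+1}q^{\,n+1-k}\,\iota_k(P)$, each summand having largest entry $n+1$. Unwinding the definition of $\Tab'$, the operator $D_{\Tab'}$ sends a tableau to its unique $\Tab'$-predecessor, obtained by deleting the box containing the largest entry; write $\delta$ for this operation, so $D_{\Tab'}(P)=\delta(P)\in Y_{n-1}$ for $n\ge 1$ and $D_{\Tab'}(\emptyset)=0$.

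Second I would extract the identity summand and reduce to a combinatorial statement. The $k=n+1$ term of $U_{\Tab}(P)$ is $q^{0}\,\iota_{n+1}(P)$, and since $n+1$ exceeds every entry of $P$ the insertion merely appends a box with entry $n+1$ at the end of the first row; hence $\delta(\iota_{n+1}(P))=P$, and this lone term of $D_{\Tab'}U_{\Tab}(P)$ already supplies $I(P)$. Expanding the remaining terms, and noting that $U_{\Tab}$ applied to $\delta(P)\in Y_{n-1}$ contributes each $\iota_k(\delta(P))$ with coefficient $q^{\,n-k}$ so that the extra factor $q$ makes the matching powers of $q$ agree, one obtains
\[
D_{\Tab'}U_{\Tab}(P)-q\,U_{\Tab}D_{\Tab'}(P)=P+\sum_{k=1}^{n}q^{\,n+1-k}\Bigl(\delta\bigl(\iota_k(P)\bigr)-\iota_k\bigl(\delta(P)\bigr)\Bigr).
\]
Since the monomials $q^{\,n+1-k}$ for $k=1,\dots,n$ are pairwise distinct, the relation $D_{\Tab'}U_{\Tab}-q\,U_{\Tab}D_{\Tab'}=I$ is equivalent to the family of identities $\delta(\iota_k(P))=\iota_k(\delta(P))$ for all $P\in Y_n$ and all $1\le k\le n$; the base case $n=0$ is the immediate computation $D_{\Tab'}U_{\Tab}(\emptyset)=\delta(\iota_1(\emptyset))=\emptyset$ with $U_{\Tab}D_{\Tab'}(\emptyset)=0$.

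The heart of the argument, and the step I expect to be the main obstacle, is this commutation of ``insert the value $k$'' with ``delete the maximum value'', which I would prove from the mechanics of row insertion. In $P$ the maximum entry $n$ occupies a corner box $c$ at the end of its row, and since $k\le n$ the raising step of $\iota_k$ turns that entry into $n+1$, so the same box $c$ carries $n+1$ in $\iota_k(P)$. I would then follow the bumping path produced by inserting $k$: every bumped value is $\le n+1$, and according to whether that path terminates in, passes through, or avoids the row of $c$, a short case analysis shows that performing the insertion commutes with deleting the box $c$; equivalently, both $\delta(\iota_k(P))$ and $\iota_k(\delta(P))$ are computed by raising the entries $\ge k$ of $P\setminus c$ and Schensted-inserting $k$, the only delicate point being that deleting $c$ either leaves the bumping path untouched or merely truncates its final, cornermost step. (Alternatively, one may write $P=P(u)$ for a word $u$ with distinct letters $1,\dots,n$, note that $\iota_k$ amounts to adjoining a letter of rank $k$ while $\delta$ is the standard ``delete the largest letter'' map, i.e.\ $\delta(P(u))=P(u\setminus n)$, and invoke the known compatibility of this deletion with Schensted insertion of a smaller letter.) Granting this lemma, the term-by-term comparison above gives $D_{\Tab'}U_{\Tab}-q\,U_{\Tab}D_{\Tab'}=I$; since all edge weights $q^{\,n+1-k}$ and $1$ lie in $\N[q]$, this shows $(\Tab,\Tab')$ is a $\qDGG$ with differential coefficient $r=1$.
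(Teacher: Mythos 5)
The paper gives no proof of this proposition (it is dismissed as ``straightforward''), and your direct verification is exactly the intended argument: the $k=n+1$ term of $U_\Tab(P)$ produces the identity, and the remaining terms cancel because deleting the maximal entry commutes with Schensted insertion of a smaller value. Your reduction, the matching of the powers $q^{n+1-k}$, and the bumping-path case analysis for $\delta(\iota_k(P))=\iota_k(\delta(P))$ are all correct, so the proposal is a valid and complete proof.
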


Fix a standard Young tableau $P \in Y_n$.  There is a bijection from
the set of paths $p$ from the empty tableau $\emptyset$ to $P$ in
$\Tab$, to the set of standard Young tableau of shape equal to the
shape of $T$.  The bijection is obtained by taking the sequence of
shapes encountered along $p$, or equivalently, by taking the
recording tableau of the sequence of Schensted insertions given by
$p$.  The following Lemma is immediate.

\begin{lem}
Suppose $p$ is a path from $\emptyset$ to $P$, corresponding to a
standard Young tableau $Q$.  Then the weight of $p$ in $\Tab$ is
equal to $q^{\inv(w(P,Q))}$, where $w(P,Q) \Leftrightarrow (P,Q)$
under the Robinson-Schensted bijection.
\end{lem}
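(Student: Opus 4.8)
The plan is to build the path $p$ directly out of the permutation $w = w(P,Q) \in S_n$ (which is defined since, by hypothesis, $Q$ has the shape of $P$) and then read off its weight. Write $w = w_1 w_2 \cdots w_n$, and for $0 \le i \le n$ let $w^{(i)} := \mathrm{std}(w_1 \cdots w_i) \in S_i$ be the standardization of the length-$i$ prefix, and let $P_i \in Y_i$ be the Schensted insertion tableau of $w^{(i)}$; thus $P_0 = \emptyset$ and $P_n = P$. The first step is to check that $(P_0, P_1, \ldots, P_n)$ is the path $p$ itself: the restriction of the Robinson--Schensted recording tableau $Q$ to its entries $\{1,\ldots,i\}$ is the recording tableau of $w_1 \cdots w_i$, whose shape is the shape of $P_i$, so the sequence of shapes encountered along $(P_0,\ldots,P_n)$ is exactly the chain of shapes of $Q$. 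By injectivity of the correspondence recalled above (paths $\emptyset \to P$ versus standard Young tableaux of shape $P$), this forces $(P_0,\ldots,P_n) = p$. It then remains to verify that each $(P_i, P_{i+1})$ is an edge of $\Tab$ and to multiply the edge weights.

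For the edge condition, I would set $k_{i+1}$ to be the value of the last letter of $w^{(i+1)}$, equivalently $k_{i+1} = \#\{\, j \le i+1 : w_j \le w_{i+1}\,\} \in \{1,\ldots,i+1\}$. Then $w^{(i)}$ is obtained from the first $i$ letters of $w^{(i+1)}$ by lowering every letter exceeding $k_{i+1}$ by one; equivalently, the first $i$ letters of $w^{(i+1)}$ are obtained from $w^{(i)}$ by raising every letter $\ge k_{i+1}$ by one. Since a monotone relabeling of entries commutes with Schensted insertion, inserting the word $w^{(i+1)}$ thus amounts to: first raise all entries $\ge k_{i+1}$ of $P_i$ by one, then Schensted-insert $k_{i+1}$. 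That is precisely the operation defining an edge $(P_i, P_{i+1})$ of $\Tab$ with parameter $k = k_{i+1}$, so $m(P_i, P_{i+1}) = q^{(i+1) - k_{i+1}}$.

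Finally, $(i+1) - k_{i+1} = \#\{\, j \le i : w_j > w_{i+1}\,\}$ is the number of inversions of $w$ whose larger position is $i+1$, so summing over $i = 0, \ldots, n-1$ gives $\sum_i \big((i+1)-k_{i+1}\big) = \inv(w)$, and the weight of $p$ in $\Tab$ is $\prod_i q^{(i+1)-k_{i+1}} = q^{\inv(w)} = q^{\inv(w(P,Q))}$, as claimed. The one place that needs genuine care is the middle paragraph: matching the ad hoc rule ``raise entries $\ge k$, then insert $k$'' defining the edges of $\Tab$ with a single insertion step applied to the standardized prefix $w^{(i+1)}$ — this is exactly the standardization identity for Schensted insertion, and everything around it is bookkeeping. (One could instead argue in reverse, recovering $w$ from an arbitrary path $p$ by iterated reverse bumping and checking that the ejected values record the parameters $k_i$, but running the argument forward from $w$ makes the role of standardization most transparent.)
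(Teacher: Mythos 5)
Your proof is correct, and it fills in exactly the argument the paper leaves implicit: the paper gives no proof at all here, declaring the lemma ``immediate'' after describing the path-to-recording-tableau bijection. Your reconstruction via prefix standardizations, the standardization identity for Schensted insertion, and the count $(i+1)-k_{i+1}=\#\{j\le i: w_j>w_{i+1}\}$ is the intended reasoning, carefully carried out.
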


It follows that Theorem \ref{thm:main} applied to Proposition
\ref{prop:tab} gives \eqref{eq:inv}, with the terms labeled by
permutations $w$ on the left hand side grouped according to the
insertion tableau of $w$.

\section{The $\qDGG$ on plane binary trees}
A {\it plane binary tree} is a tree $T$ embedded into the plane
which has three kinds of vertices: (a) a unique {\it root node} $r$
which has exactly 1 child, (b) a number of {\it internal nodes} with
two children, and (c) a number of {\it leaves} with no children. The
leaves are numbered $\{0,1,\ldots,n\}$ from left to right, where $n$
is the number of internal nodes.  Let $\T_n$ denote the set of plane
binary trees with $n$ internal nodes.  By definition, $\T_0$
consists of the tree $\emptyset$, which has a root $r$, no internal
nodes, and a single leaf $0$.

We now describe a number of combinatorial operations on plane binary
trees; see \cite{AS} for further details.  Given two plane binary
trees $T_1 \in \T_p$ and $T_2 \in \T_q$ we can {\it graft} a new
plane binary tree $T_1 \vee T_2 \in \T_{p+q+1}$ by placing $T_1$ to
the left of $T_2$ in the plane, identifying the two root nodes $r_1$
and $r_2$ to form a new internal node, and attaching a new root to
this internal node:
$$
\pstree[treefit=tight,nodesep=0.2pt,levelsep=0.5cm,treemode=U]{\TR{r}}{
    \pstree{\TR{\bullet}}{
            \TR{\circ}\TR{\circ}
    }
} \ \ \ \ \
\pstree[treefit=tight,nodesep=0.2pt,levelsep=0.5cm,treemode=U]{\TR{r}}{
    \pstree{\TR{\bullet}}{
            \TR{\circ}
            \pstree{\TR{\bullet}}{
                \TR{\circ}
                \TR{\circ}
            }
    }
}
\ \ \ \ \ \ \ \ \ \ \raisebox{20pt}{$\longrightarrow$} \ \ \ \ \
\pstree[treefit=tight,nodesep=0.2pt,levelsep=0.5cm,treemode=U]{\TR{r}}{
    \pstree{\TR{\bullet}}{
        \pstree{\TR{\bullet}}{
            \TR{\circ}\TR{\circ}
        }
        \pstree{\TR{\bullet}}{
            \TR{\circ}
            \pstree{\TR{\bullet}}{
                \TR{\circ}
                \TR{\circ}
            }
        }
    }
}
$$
Given a tree $T \in \T_p$ and a position $i \in \{0,1,\ldots,p\}$
indexing a leaf $v \in T$ we can {\it splice} $T$ at $v$ to obtain
two trees $T_1 \in \T_i$ and $T_2 \in \T_p-i$ as follows: draw the
unique path $P$ from $v$ to the root $r$.  Then the edges of $T$
weakly to the left of $P$ form the tree $T_1$, while the edges of
$T$ weakly to the right of $P$ form the tree $T_2$.  Note that every
internal node of $T$ is ``given'' to either $T_1$ or $T_2$.  The
following tree has been spliced at the $*$-ed leaf:
$$
\pstree[treefit=tight,nodesep=0.2pt,levelsep=0.5cm,treemode=U]{\TR{r}}{
    \pstree{\TR{\bullet}}{
        \pstree{\TR{\bullet}}{
            \TR{\circ}\TR{\circ}
        }
        \pstree{\TR{\bullet}}{
            \TR{*}
            \pstree{\TR{\bullet}}{
                \TR{\circ}
                \TR{\circ}
            }
        }
    }
} \ \ \ \ \ \ \ \  \ \raisebox{20pt}{$\longrightarrow$} \ \ \ \ \ \
\pstree[treefit=tight,nodesep=0.2pt,levelsep=0.5cm,treemode=U]{\TR{r}}{
    \pstree{\TR{\bullet}}{
            \pstree{\TR{\bullet}}{
                \TR{\circ}
                \TR{\circ}
            }
            \TR{\circ}
    }
} \ \ \ \ \
\pstree[treefit=tight,nodesep=0.2pt,levelsep=0.5cm,treemode=U]{\TR{r}}{
    \pstree{\TR{\bullet}}{
            \TR{\circ}
            \pstree{\TR{\bullet}}{
                \TR{\circ}
                \TR{\circ}
            }
    }
}
$$
We write $\splice(T,i) = T_1 \vee T_2$ to denote the composition of
splicing and grafting.

Given a non-empty tree $T \in \T_p$, we can obtain another tree $T^*
\in \T_{p-1}$ from $T$ by removing the leftmost (or $0$) leaf $v$
and erasing the node $w$ which is joined to $v$:
$$
\pstree[treefit=tight,nodesep=0.2pt,levelsep=0.5cm,treemode=U]{\TR{r}}{
    \pstree{\TR{\bullet}}{
        \pstree{\TR{\bullet}}{
            \TR{\circ}\TR{\circ}
        }
        \pstree{\TR{\bullet}}{
            \TR{\circ}
            \pstree{\TR{\bullet}}{
                \TR{\circ}
                \TR{\circ}
            }
        }
    }
} \ \ \ \ \ \raisebox{20pt}{$\longrightarrow$} \ \ \ \ \
\pstree[treefit=tight,nodesep=0.2pt,levelsep=0.5cm,treemode=U]{\TR{r}}{
    \pstree{\TR{\bullet}}{
        \TR{\circ}
        \pstree{\TR{\bullet}}{
            \TR{\circ}
            \pstree{\TR{\bullet}}{
                \TR{\circ}
                \TR{\circ}
            }
        }
    }
}
$$

Let $V = \cup_{i \geq 0} T_i$, with the obvious height function $h:
V \to \N$.  Define a graded graph $\Tree$ with vertex set $V$, and
edges $(T,T')$ whenever $T' = \splice(T,i)$ for some $i$; declare
that $m(T',T):=q^{i}$.  Define a graded graph $\Tree'$ with vertex
set $V$, and edges $(T^*, T)$ for every $T \neq \emptyset$; declare
that $m(T^*,T):=1$.

\begin{prop}
The pair $(\Tree, \Tree')$ is a $\qDGG$ with differential
coefficient $r = 1$.
\end{prop}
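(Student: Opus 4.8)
The plan is to write the operators $U = U_{\Tree}$ and $D = D_{\Tree'}$ explicitly and reduce \eqref{eq:qDGG} to two identities relating the tree operations $\splice$, $\vee$ and $T \mapsto T^{*}$.

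Directly from the definitions of $\Tree$ and $\Tree'$, for $T \in \T_{p}$ one has $U(T) = \sum_{i=0}^{p} q^{i}\,\splice(T,i)$, since the edges of $\Tree$ leaving $T$ point to the trees $\splice(T,i)$ with weight $q^{i}$, and $D(T) = T^{*}$ for $T \ne \emptyset$ while $D(\emptyset) = 0$, since the unique edge of $\Tree'$ below $T$ is $(T^{*},T)$, of weight $1$. Hence, for $T \in \T_{p}$,
\[
D U(T) = \sum_{i=0}^{p} q^{i}\,\splice(T,i)^{*}, \qquad q\,U D(T) = q\,U(T^{*}) = \sum_{i=1}^{p} q^{i}\,\splice(T^{*},i-1),
\]
the second sum being empty when $p = 0$. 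Thus \eqref{eq:qDGG} with $r = 1$ is equivalent to the two combinatorial identities
\[
\splice(T,0)^{*} = T \ \ (p \ge 0), \qquad \splice(T,i)^{*} = \splice(T^{*},i-1) \ \ (1 \le i \le p);
\]
granting them, $D U(T) = T + \sum_{i=1}^{p} q^{i}\,\splice(T^{*},i-1) = T + q\,U D(T)$, which is what we want.

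For the first identity I would note that splicing $T$ at its leftmost leaf $0$ produces $T_{1} = \emptyset$ (it lies in $\T_{0}$) and $T_{2} = T$, so that $\splice(T,0) = \emptyset \vee T$; and then $(\emptyset \vee T)^{*} = T$ straight from the definitions, because $(\cdot)^{*}$ erases precisely the internal node introduced by the grafting, after which the new root becomes joined to the old child of the root of $T$. The same bookkeeping gives the general fact that $(T_{1} \vee T_{2})^{*} = T_{1}^{*} \vee T_{2}$ whenever $T_{1} \ne \emptyset$, since the leftmost leaf of $T_{1} \vee T_{2}$ together with its parent lies inside the $T_{1}$ part, so deleting them turns that part into $T_{1}^{*}$ while leaving the $T_{2}$ part alone.

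For the second identity, write $\splice(T,i) = T_{1} \vee T_{2}$ with $T_{1} \in \T_{i}$ and $T_{2} \in \T_{p-i}$; since $i \ge 1$ we have $T_{1} \ne \emptyset$, so the general fact just recorded gives $\splice(T,i)^{*} = T_{1}^{*} \vee T_{2}$. On the other hand, passing from $T$ to $T^{*}$ deletes leaf $0$ of $T$ and its parent; because $i \ge 1$, leaf $0$ lies strictly to the left of leaf $i$, so leaf $0$ and its parent are both assigned to the left piece $T_{1}$ in the splice at leaf $i$, leaf $0$ stays the leftmost leaf of $T_{1}$ with the same parent, and hence erasing them is precisely the map $T_{1} \mapsto T_{1}^{*}$; moreover this deletion relabels leaf $i$ of $T$ as leaf $i-1$ of $T^{*}$ and does not disturb the part of $T$ that becomes $T_{2}$. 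Therefore splicing $T^{*}$ at leaf $i-1$ has left piece $T_{1}^{*}$ and right piece $T_{2}$, i.e.\ $\splice(T^{*},i-1) = T_{1}^{*} \vee T_{2} = \splice(T,i)^{*}$, as required. I expect the only delicate point to be exactly this — that removing leaf $0$ and its parent commutes with splicing, at the cost of the index shift $i \mapsto i-1$, because the two operations act on disjoint parts of the tree. Proving it cleanly needs a short case analysis, according to whether leaf $i$ does or does not lie in the subtree hanging below the parent of leaf $0$, and this is the one place where one must examine the structure of a plane binary tree directly rather than merely unwind the definitions.
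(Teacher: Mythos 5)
Your proposal is correct and follows essentially the same route as the paper: the paper's proof consists precisely of the two identities $\splice(T,0)^{*}=T$ and $(\splice(T,i))^{*}=\splice(T^{*},i-1)$ for $1\le i\le p$, stated there as ``not difficult to see,'' together with the resulting cancellation in $(D_{\Tree'}U_{\Tree}-qU_{\Tree}D_{\Tree'})T$. You simply spell out the operator computation and the verification of these identities (via $(T_{1}\vee T_{2})^{*}=T_{1}^{*}\vee T_{2}$ for $T_{1}\ne\emptyset$) in more detail.
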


\begin{proof}
Let $T \in \T_p$.  Let $T' = \splice(T,i)$, where $i \in
\{1,2,\ldots,p\}$. Then it is not difficult to see that $(T')^* =
\splice(T^*,i-1)$.  This cancels out all the terms in
$(D_{\Tree'}U_{\Tree} - qU_{\Tree}D_{\Tree'})T$ except for the one
corresponding to $\splice(T,0)^*  = T$ which has coefficient $q^0 =
1$.
\end{proof}

To describe the identity of Theorem \ref{thm:main} explicitly, let
us define a linear extension of $T \in \T_p$ to be a bijective
labeling $e: T \to \{1,2,\ldots,p\}$ of the internal nodes of $T$
with $\{1,2,\ldots,p\}$, so that children are labeled with numbers
bigger than those of their ancestors.  Let $E(T)$ denote the set of
linear extensions of $T$.  Also, let us say that an internal node
$v$ is {\it to the left} (resp. {\it to the right}) of an internal
node $w$ if $v$ belongs to the left (resp. right) branch and $w$
belongs to the right (resp. left) branch of their closest (youngest)
common ancestor.

If $e$ is a linear extension of $T \in \T_p$, then we may define a
permutation $w_e \in S_p$ by reading the labels of the internal
nodes from left to right.  It is well known (see for example
\cite{LR}) that as $T$ varies over $\T_p$ and $e$ varies over $E(T)$
we obtain every $w \in S_p$ exactly once in this way.  For example,
the following are the three linear extensions of the same tree:
$$
\pstree[treefit=tight,nodesep=1.2pt,levelsep=0.7cm,treemode=U]{\TR{r}}{
    \pstree{\TR{1}}{
        \pstree{\TR{2}}{
            \TR{\circ}\TR{\circ}
        }
        \pstree{\TR{3}}{
            \TR{\circ}
            \pstree{\TR{4}}{
                \TR{\circ}
                \TR{\circ}
            }
        }
    }
}\ \ \ \ \
\pstree[treefit=tight,nodesep=1.2pt,levelsep=0.7cm,treemode=U]{\TR{r}}{
    \pstree{\TR{1}}{
        \pstree{\TR{3}}{
            \TR{\circ}\TR{\circ}
        }
        \pstree{\TR{2}}{
            \TR{\circ}
            \pstree{\TR{4}}{
                \TR{\circ}
                \TR{\circ}
            }
        }
    }
} \ \ \ \ \
\pstree[treefit=tight,nodesep=1.2pt,levelsep=0.7cm,treemode=U]{\TR{r}}{
    \pstree{\TR{1}}{
        \pstree{\TR{4}}{
            \TR{\circ}\TR{\circ}
        }
        \pstree{\TR{2}}{
            \TR{\circ}
            \pstree{\TR{3}}{
                \TR{\circ}
                \TR{\circ}
            }
        }
    }
}
$$
corresponding to the permutations $2134, 3124$, and $4123$.

\begin{lem}
Let $T \in \T_p$.  Then
$$
f_{\Tree}^T = \sum_{e \in E(T)} q^{\inv(w_e)} \ \ \ \ \ {\rm and} \
\ \ \ \ f_{\Tree'}^T = 1.
$$
\end{lem}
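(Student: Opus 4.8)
The plan is to treat the two identities separately. The formula $f_{\Tree'}^T = 1$ is immediate and I would dispose of it first: each $T \neq \emptyset$ has a well-defined $T^*$ of height one less, and the edges of $\Tree'$ are exactly the pairs $(S^*,S)$, so the reverse of any upward path from $\emptyset$ to $T$ is forced to be $T \to T^* \to (T^*)^* \to \cdots \to \emptyset$; hence there is a unique such path and all of its weights are $1$. For $f_\Tree^T$ I would construct a weight-preserving bijection $\Phi$ from the set $\mathcal{P}(T)$ of upward paths $\emptyset = T_0 \to T_1 \to \cdots \to T_p = T$ in $\Tree$ to the set $E(T)$ of linear extensions of $T$, sending the weight of a path to $q^{\inv(w_e)}$. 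The map: each step is $T_j = \splice(T_{j-1}, i_{j-1})$ for a unique $i_{j-1} \in \{0, \dots, j-1\}$ (the trees $\splice(S,0), \dots, \splice(S,p)$ are pairwise distinct, their real roots having left branches of sizes $0, 1, \dots, p$), the grafting performed at step $j$ creates exactly one new internal node $\nu_j$, so the internal nodes of $T$ are $\nu_1, \dots, \nu_p$, and I set $e(\nu_j) := p+1-j$. The weight of the path is $q^{i_0 + \cdots + i_{p-1}}$, and $i_{j-1}$ is the number of internal nodes in the left subtree of $\nu_j$ inside $T_j$; since $\nu_j$ is the real root of $T_j$ at that moment, this equals $\#\{a < j : \nu_a \text{ precedes } \nu_j \text{ in the in-order reading of } T_j\}$.

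The crux is a stability property of $\splice$, which I would establish by a case analysis on the positions of two internal nodes $u, v$ relative to the leaf-to-root path $P$ along which $S$ is spliced, using that the ``weakly left of $P$'' part sits entirely to the left of the ``weakly right of $P$'' part, that a node of $P$ is handed to the side on which its off-path subtree hangs, and that within each part the planar order and the ancestor relation are inherited from $S$: \emph{$u$ precedes $v$ in the in-order reading of $S$ if and only if it does in $\splice(S,i)$, and if $u$ is an ancestor of $v$ in $S$ then $v$ is not an ancestor of $u$ in $\splice(S,i)$.} Granting this, in-order precedence between two $\nu$'s never changes once both exist, so $i_{j-1} = \#\{a < j : \nu_a \text{ precedes } \nu_j \text{ in } T\}$; summing over $j$, the path weight is $q$ raised to $\#\{(a,j) : a < j,\ \nu_a \text{ precedes } \nu_j \text{ in } T\}$, which is exactly $\inv(w_e)$ since $w_e$ is the in-order list of the labels and $e(\nu_a) > e(\nu_j) \iff a < j$. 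The ancestor half of the stability property shows $e$ is a genuine linear extension: if $\nu_j$ were an ancestor of $\nu_k$ with $j < k$, the relation would have had to reverse, since $\nu_k$ is an ancestor of $\nu_j$ the moment it is born. Injectivity of $\Phi$ follows because the same formula expresses each $i_{j-1}$ in terms of $e$ and $T$ alone, and since there are $\prod_{j=1}^{p} j = p!$ length-$p$ walks from $\emptyset$ while $(T,e) \mapsto w_e$ is a bijection onto $S_p$, we get $\sum_{T \in \T_p} |\mathcal{P}(T)| = p! = \sum_{T \in \T_p} |E(T)|$, which forces $\Phi$ to be bijective for every $T$.

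The main obstacle is the stability property of $\splice$: it is visually evident from the planar picture, but it needs care because splicing changes the ambient tree and is not a local move, so one must track horizontal order and ancestry simultaneously and rule out the ``crossing'' configurations. Everything else — the easy half $f_{\Tree'}^T = 1$, the weight accounting, injectivity, and the counting identity — is routine bookkeeping once that property is available. (One could instead route through the permutation example, using $f^w_\Perm = q^{\inv(w)}$ and the bijection $(T,e) \leftrightarrow w_e$ to rewrite $\sum_{e \in E(T)} q^{\inv(w_e)}$ as a sum over permutations of tree-shape $T$, but matching that to paths in $\Tree$ requires the very same analysis of how $\splice$ interacts with the tree-to-permutation correspondence, so there is no genuine shortcut.)
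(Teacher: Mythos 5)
Your proposal is correct and follows essentially the same route as the paper: the identical bijection between upward paths and linear extensions (the internal node created at each step receives the smallest remaining label), with the splice position $i$ accounting exactly for the inversions of the new node against the nodes of $T_1$. The only differences are in the bookkeeping --- you isolate as an explicit ``stability'' lemma the preservation of in-order precedence and non-reversal of ancestry under $\splice$, which the paper uses tacitly, and you obtain surjectivity from injectivity plus the count $\sum_{T \in \T_p}|\mathcal{P}(T)| = p! = \sum_{T \in \T_p}|E(T)|$, where the paper instead inverts the last step directly by comparing labels along the rightmost branch of $T_1$ and the leftmost branch of $T_2$.
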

\begin{proof}
The claim for $\Tree'$ is clear.  For $\Tree$, we will describe a
bijection between $E(T)$ and paths from $\emptyset$ to $T$.

Let $e'$ be a linear extension of $T'$ and suppose that $T = T_1
\vee T_2$ is obtained from grafting a splice of $T'$.  We may treat
$T_1$ and $T_2$ as subtrees of $T'$, and in particular restrict $e'$
to $T_1$ and $T_2$.  Thus we may define a labeling $e$ (depending on
$e'$, $T'$, $T_1$, and $T_2$) of $T$ by declaring it to be equal to
$e' + 1$ on $T_1 \cup T_2$, and equal to 1 on the new internal node
present in $T$ but absent in $T'$.  It is straight forward to see
that $e \in E(T)$.  Conversely, given $e \in E(T)$, it is easy to
recover $e'$ and $T'$ by comparing the labels along the leftmost
branch of $T_2$ with the labels along the rightmost branch of $T_1$.

Recursively applying this procedure we obtain the desired bijection
between $E(T)$ and paths from $\emptyset$ to $T$.  Finally, the
number of new inversions created in each step of this procedure is
equal to the number of internal nodes of $T_1$, which in turn is the
exponent of $q$ in $m(T',T)$.  This completes the proof.
\end{proof}

Thus Theorem 2 for $(\Tree, \Tree')$ amounts to grouping together
the terms of the left hand side of \eqref{eq:inv} into Catalan
number many terms.

\medskip
{\bf Acknowledgements.} I am grateful to Nantel Bergeron and Huilan
Li for related collaboration and conversations.


\begin{thebibliography}{a-a}
\bibitem[AS] {AS} {\sc M.~Aguiar and F.~Sottile:}
Structure of the Loday-Ronco Hopf algebra of trees, {\sl J. Algebra}
\textbf{295} (2006), 473--511.

\bibitem[BLL]{BLL} {\sc N.~Bergeron, T.~Lam, and H.~Li:}
Combinatorial Hopf algebras and Towers of Algebra, in preparation.
Preliminary version at: {\tt arXiv: 0710.3744}.

\bibitem[Fom]{Fom} {\sc S.~Fomin:} Duality of graded graphs,
{\sl J. Algebraic Combin.} \textbf{3} (1994), 357--404.


\bibitem[Lam]{Lam} {\sc T.~Lam:} Signed differential posets and sign-imbalance,
{\sl J. Combin. Theory Ser. A} to appear; {\tt
arxiv:math.CO/0611296}.

\bibitem[LS]{LS} {\sc T.~Lam and M.~Shimozono:}
Dual graded graphs for Kac-Moody algebras, {\sl Algebra and Number
Theory} \textbf{1} (2007), 451--488; {\tt arxiv: math.CO/0702090}.

\bibitem[LR]{LR} {\sc J.-L.~Loday and M.~Ronco:}
Hopf algebra of the planar binary trees, {\sl Adv. in Math.}
\textbf{139} (1998), 293--309.

\bibitem[Sta]{Sta} {\sc R.~Stanley:} Differential posets, {\sl J. Amer.
Math. Soc.} \textbf{1} (1988), 919--961.

\bibitem[EC1]{EC1} {\sc R.~Stanley}, {\sl Enumerative
Combinatorics, Vol 1}, Cambridge Studies in Advanced Mathematics,
Cambridge University Press, 1997.

\bibitem[EC2]{EC2} {\sc R.~Stanley}, {\sl Enumerative
Combinatorics, Vol 2}, Cambridge Studies in Advanced Mathematics,
Cambridge University Press, 2001.

\bibitem[Var]{Var} {\sc A.~Varvak:}
Rook numbers and the normal ordering problem, {\sl J. Comb. Theory
Ser. A} \textbf{112} (2005), 292--307.
\end{thebibliography}
\end{document}